\documentclass[reqno,a4paper]{amsart}
\usepackage{amssymb}
\usepackage[colorlinks=true,hypertex]{hyperref}
\allowdisplaybreaks[4]
\theoremstyle{plain}
\newtheorem{thm}{Theorem}

\theoremstyle{remark}
\newtheorem{rem}{Remark}

\DeclareMathOperator{\arcsinh}{arcsinh}
\date{Commenced on 10 March 2009 and completed on 11 March 2009 in Jiaozuo}
\date{}

\begin{document}

\title{Monotonicity results and bounds for the inverse hyperbolic sine}

\author[F. Qi]{Feng Qi}
\address[F. Qi]{Research Institute of Mathematical Inequality Theory, Henan Polytechnic University, Jiaozuo City, Henan Province, 454010, China}
\email{\href{mailto: F. Qi <qifeng618@gmail.com>}{qifeng618@gmail.com}, \href{mailto: F. Qi <qifeng618@hotmail.com>}{qifeng618@hotmail.com}, \href{mailto: F. Qi <qifeng618@qq.com>}{qifeng618@qq.com}}
\urladdr{\url{http://qifeng618.spaces.live.com}}

\author[B.-N. Guo]{Bai-Ni Guo}
\address[B.-N. Guo]{School of Mathematics and Informatics, Henan Polytechnic University, Jiaozuo City, Henan Province, 454010, China}
\email{\href{mailto: B.-N. Guo <bai.ni.guo@gmail.com>}{bai.ni.guo@gmail.com}, \href{mailto: B.-N. Guo <bai.ni.guo@hotmail.com>}{bai.ni.guo@hotmail.com}}
\urladdr{\url{http://guobaini.spaces.live.com}}

\begin{abstract}
In this paper, we present monotonicity results of a function involving to the inverse hyperbolic sine. From these, we derive some inequalities for bounding the inverse hyperbolic sine.
\end{abstract}

\keywords{bound, inverse hyperbolic sine, monotonicity, minimum}

\subjclass[2000]{Primary 26A48, 33B10; Secondary 26D05}

\thanks{This paper was typeset using \AmS-\LaTeX}

\maketitle

\section{Introduction and main results}

In \cite[Theorem~1.9 and Theorem~1.10]{Zhu-New-Arc-Hyper}, the following inequalities were established: For $0\le x\le r$ and $r>0$,
the double inequality
\begin{equation}\label{zhu-arcsinh-ineq-1}
\frac{(a+1)x}{a+\sqrt{1+x^2}\,}\le\arcsinh x\le \frac{(b+1)x}{b+\sqrt{1+x^2}\,}
\end{equation}
holds true if and only if $a\le2$ and
\begin{equation}\label{zhu-cond-1}
b\ge\frac{\sqrt{1+r^2}\,\arcsinh r-r}{r-\arcsinh r}.
\end{equation}
\par
The aim of this paper is to elementarily generalize the inequality~\eqref{zhu-arcsinh-ineq-1} to monotonicity results and to deduce more inequalities.
\par
Our results may be stated as the following theorems.

\begin{thm}\label{Arc-Hyperbolic-Sine-thm1}
For $\theta\in\mathbb{R}$, let
\begin{equation}\label{f-theta(x)-dfn}
f_\theta(x)=\frac{\bigl(\theta+\sqrt{1+x^2}\,\bigr)\arcsinh x}{x},\quad  x>0.
\end{equation}
\begin{enumerate}
\item
When $\theta\le2$, the function $f_\theta(x)$ is strictly increasing;
\item
When $\theta>2$, the function $f_\theta(x)$ has a unique minimum.
\end{enumerate}
\end{thm}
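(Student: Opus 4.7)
My plan is to remove the algebraic complication in $f_\theta$ by the substitution $u=\arcsinh x$ (so $x=\sinh u$, $\sqrt{1+x^{2}}=\cosh u$), whereupon
\[
f_\theta(x)=F(u):=\frac{(\theta+\cosh u)\,u}{\sinh u},\qquad u>0.
\]
Since $u\mapsto\sinh u$ is a strictly increasing bijection $(0,\infty)\to(0,\infty)$, the monotonicity or the existence of a unique minimum for $F$ transfers without change to $f_\theta$. A direct differentiation, using $\cosh^{2}u-\sinh^{2}u=1$ to collapse one term, yields
\[
F'(u)\,\sinh^{2}u=(\sinh u\cosh u-u)-\theta\,(u\cosh u-\sinh u).
\]

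Next I would record that both $g_1(u):=\sinh u\cosh u-u$ and $h_1(u):=u\cosh u-\sinh u$ are strictly positive on $(0,\infty)$: each vanishes at $0$ and has a strictly positive derivative there (namely $g_1'(u)=2\sinh^{2}u$ and $h_1'(u)=u\sinh u$). Consequently the sign of $F'(u)$ depends on $\theta$ only through
\[
F'(u)>0 \iff q(u):=\frac{g_1(u)}{h_1(u)}=\frac{\sinh u\cosh u-u}{u\cosh u-\sinh u}>\theta,
\]
and the whole theorem is reduced to a single-variable analysis of $q$.

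The crux is to prove that $q$ is strictly increasing on $(0,\infty)$, with $q(0^{+})=2$ and $q(u)\to\infty$. For this I would invoke the L'Hospital-type monotonicity rule of Anderson--Vamanamurthy--Vuorinen: since $g_1(0)=h_1(0)=0$, the monotonicity of $q$ is inherited from that of
\[
\frac{g_1'(u)}{h_1'(u)}=\frac{2\sinh^{2}u}{u\sinh u}=\frac{2\sinh u}{u},
\]
which is strictly increasing on $(0,\infty)$ by the power-series expansion $\sinh u/u=\sum_{n\ge 0}u^{2n}/(2n+1)!$. The same rule delivers $q(0^{+})=\lim_{u\to 0^+}2\sinh u/u=2$, and $q(u)\to\infty$ follows from the exponential growth of the numerator against the polynomial-times-exponential denominator.

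With $q$ in hand the conclusion is immediate: when $\theta\le 2$, $q(u)>2\ge\theta$ for every $u>0$, so $F'>0$ throughout and $f_\theta$ is strictly increasing; when $\theta>2$, the strict monotonicity of $q$ from $2$ to $\infty$ produces a unique $u_0>0$ with $q(u_0)=\theta$, and $F'<0$ on $(0,u_0)$, $F'>0$ on $(u_0,\infty)$, giving the unique minimum. The only real obstacle is the monotonicity of $q$; the L'Hospital rule dispatches it cleanly, whereas differentiating $q$ directly would force one to sign a rather unpleasant expression in $\sinh$ and $\cosh$.
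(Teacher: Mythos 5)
Your proof is correct, and it follows a genuinely different route from the paper. You substitute $u=\arcsinh x$ to pass to $F(u)=(\theta+\cosh u)u/\sinh u$, observe that $\theta$ enters the sign of $F'$ only linearly, and so reduce everything to the single $\theta$-free ratio $q(u)=(\sinh u\cosh u-u)/(u\cosh u-\sinh u)$; its strict increase from $q(0^+)=2$ to $\infty$ (via the monotone l'H\^opital rule, since $g_1'/h_1'=2\sinh u/u$ is strictly increasing) settles both cases at once and explains why $2$ is the threshold, even characterizing the minimum point by $q(u_0)=\theta$. The paper instead differentiates $f_\theta$ in the original variable, factors $f_\theta'(x)=\tfrac1{x^2}\bigl(\theta+\tfrac1{\sqrt{x^2+1}}\bigr)h_\theta(x)$, differentiates again so that the sign of $h_\theta'$ reduces to the quadratic $q_x(\theta)=2-\theta^2+\theta\sqrt{x^2+1}$ in $\theta$, and then argues by cases $\theta\le-1$, $0\le\theta\le2$, $\theta>2$, with a separate auxiliary argument (monotonicity of $x^2f_\theta'(x)$) for $\theta\in(-1,0)$, where the factored form is awkward because $\theta+1/\sqrt{x^2+1}$ can vanish. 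What your approach buys is uniformity — no case splitting in $\theta$ and no special treatment of $(-1,0)$ — at the price of invoking the Anderson--Vamanamurthy--Vuorinen rule, which the paper's deliberately elementary argument avoids; if you wanted to stay elementary you could replace that invocation by a direct verification that $g_1'h_1-g_1h_1'>0$, but as written the appeal to the standard rule is legitimate (its hypotheses $g_1(0)=h_1(0)=0$ and $h_1'(u)=u\sinh u>0$ are checked). One small slip of wording: $g_1$ and $h_1$ do not have strictly positive derivatives \emph{at} $0$ (both derivatives vanish there); what you use, correctly, is that the derivatives are positive on $(0,\infty)$ and the functions vanish at $0$.
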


As straightforward consequences of Theorem~\ref{Arc-Hyperbolic-Sine-thm1}, the following inequalities are inferred.

\begin{thm}\label{Arc-Hyperbolic-Sine-thm2}
Let $r>0$.
\begin{enumerate}
\item
For $\theta\le2$, the double inequality
\begin{equation}\label{Arc-Hyperbolic-Sine-ineq1}
\frac{(1+\theta)x}{\theta+\sqrt{1+x^2}\,}<\arcsinh x\le \frac{\bigl[\bigl(\theta+\sqrt{1+r^2}\,\bigr)(\arcsinh r)/r\bigr]x}{\theta+\sqrt{1+x^2}\,}
\end{equation}
holds true on $(0,r]$, where the constants $1+\theta$ and $\frac{\left(\theta+\sqrt{1+r^2}\,\right)\arcsinh r}{r}$ in~\eqref{Arc-Hyperbolic-Sine-ineq1} are the best possible.
\item
For $\theta>2$, the double inequality
\begin{equation}\label{Arc-Hyperbolic-Sine-ineq2}
\frac{4\bigl(1-1/{\theta^2}\bigr)x}{\theta+\sqrt{1+x^2}\,}\le \arcsinh x \le \frac{\max\bigl\{1+\theta, \bigl(\theta+\sqrt{1+r^2}\,\bigr)(\arcsinh r)/r\bigr\}x} {\theta+\sqrt{1+x^2}\,}.
\end{equation}
\end{enumerate}
\end{thm}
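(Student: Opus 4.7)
The overarching strategy is to observe that dividing through by $x/(\theta + \sqrt{1+x^2}\,)$ converts both desired double inequalities into pointwise two-sided estimates on the auxiliary function $f_\theta(x)$ defined in~\eqref{f-theta(x)-dfn}. Consequently the problem reduces to determining, on $(0,r]$, the infimum and supremum of $f_\theta$, for which Theorem~\ref{Arc-Hyperbolic-Sine-thm1} already supplies the qualitative shape. A preliminary calculation
\begin{equation*}
\lim_{x\to 0^+}f_\theta(x) = 1+\theta,
\end{equation*}
obtained from $(\arcsinh x)/x \to 1$, furnishes the required boundary value at the (excluded) left endpoint.

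For part~(1), Theorem~\ref{Arc-Hyperbolic-Sine-thm1}(i) immediately yields $1+\theta < f_\theta(x) \le f_\theta(r)$ on $(0,r]$, and a rearrangement produces~\eqref{Arc-Hyperbolic-Sine-ineq1}; sharpness of both constants follows because the lower one is a limit (so no strictly larger constant survives near $0$) while the upper one is actually attained at $x = r$. For the upper bound in part~(2), Theorem~\ref{Arc-Hyperbolic-Sine-thm1}(ii) together with the boundary limit forces $f_\theta$ to descend from $1+\theta$ to its unique minimum and then ascend, so on $(0,r]$ the supremum must be attained at one of the two ``endpoints'' $0^+$ or $r$, producing the claimed maximum of $1+\theta$ and $f_\theta(r)$.

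For the lower bound in part~(2), let $x_0$ be the unique minimizer. Routine differentiation of~\eqref{f-theta(x)-dfn} combined with the stationarity condition $f_\theta'(x_0)=0$ yields
\begin{equation*}
x_0\bigl(\theta + \sqrt{1+x_0^2}\,\bigr) = \bigl(1 + \theta\sqrt{1+x_0^2}\,\bigr)\arcsinh x_0,
\end{equation*}
and resubstitution into~\eqref{f-theta(x)-dfn} gives the closed form $f_\theta(x_0) = (\theta + u_0)^2/(1 + \theta u_0)$ with $u_0 := \sqrt{1+x_0^2}\ge 1$. The crux is then the elementary polynomial identity
\begin{equation*}
\theta^2(\theta+u)^2 - 4(\theta^2-1)(1+\theta u) = (\theta^2 - \theta u - 2)^2 \ge 0,
\end{equation*}
which, after dividing by $\theta^2(1+\theta u)>0$, supplies $f_\theta(x_0)\ge 4(1-1/\theta^2)$. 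I expect this last identity to be the main obstacle: since $x_0$ admits no elementary closed form, direct evaluation of $f_\theta(x_0)$ is doomed, and the essential trick is to eliminate $\arcsinh x_0$ via stationarity and then bound the resulting rational function of $u_0$ uniformly over $u_0\ge 1$.
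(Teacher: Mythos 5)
Your proposal is correct and follows essentially the same route as the paper: reduce both double inequalities to bounds on $f_\theta$, use Theorem~\ref{Arc-Hyperbolic-Sine-thm1} together with $\lim_{x\to0^+}f_\theta(x)=1+\theta$ for part (1) and the upper bound in part (2), and evaluate $f_\theta$ at the minimizer via the stationarity condition to obtain $f_\theta(x_0)=(\theta+u_0)^2/(1+\theta u_0)$. The only difference is that you explicitly justify the final estimate $(\theta+u)^2/(1+\theta u)\ge 4\bigl(1-1/\theta^2\bigr)$ through the identity $\theta^2(\theta+u)^2-4(\theta^2-1)(1+\theta u)=(\theta^2-\theta u-2)^2$, a step the paper merely asserts; the identity checks out, so your argument is complete.
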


\begin{rem}
Replacing $\arcsinh x$ by $x$ in~\eqref{Arc-Hyperbolic-Sine-ineq1} and~\eqref{Arc-Hyperbolic-Sine-ineq2} yields
\begin{multline}
\left.\begin{aligned}
\frac{\bigl[\bigl(\theta+\sqrt{1+r^2}\,\bigr)(\arcsinh r)/r\bigr]\sinh x}{\theta+\cosh x}&\\
\frac{\max\bigl\{1+\theta, \bigl(\theta+\sqrt{1+r^2}\,\bigr)(\arcsinh r)/r\bigr\}\sinh x} {\theta+\cosh x}&
\end{aligned}\right\}>x\\
>\begin{cases}
\dfrac{(1+\theta)\sinh x}{\theta+\cosh x},&\theta\le2\\[0.7em]
\dfrac{4\bigl(1-1/{\theta^2}\bigr)\sinh x}{\theta+\cosh x},&\theta>2
\end{cases}
\end{multline}
for $x\in(0,\arcsinh r)$. This can be regarded as Oppenheim type inequalities for the hyperbolic sine and cosine functions. For information on Oppenheim's double inequality for the sine and cosine functions, please refer to~\cite{Oppeheim-Sin-Cos.tex} and closely related references therein.
\end{rem}

\begin{rem}
It is clear that the left-hand side inequality in~\eqref{Arc-Hyperbolic-Sine-ineq1} recovers the left-hand side inequality in~\eqref{zhu-arcsinh-ineq-1} while the right-hand side inequalities in~\eqref{zhu-arcsinh-ineq-1} and~\eqref{Arc-Hyperbolic-Sine-ineq1} do not include each other.
\end{rem}

\begin{rem}
By similar approach to prove our theorems in next section, we can procure similar monotonicity results and inequalities for the inverse hyperbolic cosine.
\end{rem}

\section{Proof of theorems}

Now we prove our theorems elementarily.

\begin{proof}[Proof of Theorem~\ref{Arc-Hyperbolic-Sine-thm1}]
Direct differentiation yields
\begin{align*}
f_\theta'(x)&=\frac1{x^2}\biggl(\theta+\frac1{\sqrt{x^2+1}\,}\biggr) \biggl[\frac{x\bigl({\theta}/{\sqrt{x^2+1}\,}+1\bigr)} {\theta+1/{\sqrt{x^2+1}\,}}-\arcsinh x\biggr]\\
&\triangleq\frac1{x^2}\biggl(\theta+\frac1{\sqrt{x^2+1}\,}\biggr)h_{\theta}(x),\\
h_{\theta}'(x)&=\frac{x^2 \bigl(2-\theta^2+\theta\sqrt{x^2+1}\,\bigr)} {\sqrt{x^2+1}\,\bigl(\theta\sqrt{x^2+1}\,+1\bigr)^2}\\
&\triangleq\frac{x^2q_x(\theta)} {\sqrt{x^2+1}\,\bigl(\theta\sqrt{x^2+1}\,+1\bigr)^2}.
\end{align*}
The function $q_x(\theta)$ has two zeros
$$
\theta_1(x)=\frac{\sqrt{1+x^2}\,-\sqrt{9+x^2}\,}2\quad \text{and}\quad \theta_2(x)=\frac{\sqrt{1+x^2}\,+\sqrt{9+x^2}\,}2.
$$
They are strictly increasing and have the bounds $-1\le\theta_1(x)<0$ and $\theta_2(x)\ge2$ on $(0,\infty)$. As a result, under the condition $\theta\not\in(-1,0)$,
\begin{enumerate}
\item
when $\theta\le-1$, the function $q_x(\theta)$ and the derivative $h_{\theta}'(x)$ are negative, and so the function $h_{\theta}(x)$ is strictly decreasing on $(0,\infty)$;
\item
when $0\le\theta\le2$, the function $q_x(\theta)$ and the derivative $h_{\theta}'(x)$ is positive, and so the function $h_{\theta}(x)$ is strictly increasing on $(0,\infty)$;
\item
when $\theta>2$, the function $q_x(\theta)$ and the derivative $h_{\theta}'(x)$ have a unique zero which is the unique minimum point of $h_{\theta}(x)$.
\end{enumerate}
Furthermore, since $\lim_{x\to\infty}h_{\theta}(x)=\infty$ for $\theta\ge0$ and $\lim_{x\to0^+}h_{\theta}(x)=0$, it follows that
\begin{enumerate}
\item
when $\theta\le-1$, the function $h_{\theta}(x)$ is negative, and so the derivative $f_\theta'(x)$ is positive, that is, the function $f_\theta(x)$ is strictly increasing on $(0,\infty)$;
\item
when $0\le\theta\le2$, the function $h_{\theta}(x)$ is positive, and so the derivative $f_\theta'(x)$ is also positive, accordingly, the function $f_\theta(x)$ is strictly increasing on $(0,\infty)$;
\item
when $\theta>2$, the function $h_{\theta}(x)$ and the derivative $f_\theta'(x)$ have a unique zero which is the unique minimum point of the function $f_\theta(x)$ on $(0,\infty)$.
\end{enumerate}
\par
On the other hand, when $\theta\in(-1,0)$, we have
\begin{align*}
\bigl[x^2f_\theta'(x)\bigr]'&=\frac{x^2\bigl[\sqrt{x^2+1}\, +(\arcsinh x)/x-\theta\bigr]} {(x^2+1)^{3/2}}>0
\end{align*}
which means that the function $x^2f_\theta'(x)$ is strictly increasing on $(0,\infty)$. From the limit $\lim_{x\to0^+}\bigl[x^2f_\theta'(x)\bigr]=0$, it is derived that the function $x^2f_\theta'(x)$ is positive. Hence the function $f_\theta(x)$ is strictly increasing on $(0,\infty)$.
The proof of Theorem~\ref{Arc-Hyperbolic-Sine-thm1} is complete.
\end{proof}

\begin{proof}[Proof of Theorem~\ref{Arc-Hyperbolic-Sine-thm2}]
Since $\lim_{x\to0^+}f_\theta(x)=1+\theta$, by Theorem~\ref{Arc-Hyperbolic-Sine-thm1}, it is easy to see that $1+\theta<f_\theta(x)\le f_\theta(r)$ on $(0,r]$ for $\theta\le2$. The inequality~\eqref{Arc-Hyperbolic-Sine-ineq1} is thus proved.
\par
For $\theta>2$, the minimum point $x_0\in(0,\infty)$ satisfies
$$
\arcsinh x_0=\frac{x_0\bigl({\theta}/{\sqrt{x_0^2+1}\,}+1\bigr)} {\theta+1/{\sqrt{x_0^2+1}\,}}.
$$
Therefore, the minimum of the function $f_\theta(x)$ on $(0,\infty)$ equals
\begin{gather*}
\frac{\bigl(\theta+\sqrt{1+x_0^2}\,\bigr)\arcsinh x_0}{x_0} =\frac{x_0\bigl({\theta}/{\sqrt{x_0^2+1}\,}+1\bigr)} {\theta+1/{\sqrt{x_0^2+1}\,}} \cdot \frac{\bigl(\theta+\sqrt{1+x_0^2}\,\bigr)}{x_0}\\
=\frac{\bigl({\theta}/{\sqrt{x_0^2+1}\,}+1\bigr)\bigl(\theta+\sqrt{1+x_0^2}\,\bigr)} {\theta+1/{\sqrt{x_0^2+1}\,}} =\frac{\bigl(\theta+\sqrt{1+x_0^2}\,\bigr)^2} {\theta{\sqrt{x_0^2+1}\,}+1} \ge4\biggl(1-\frac1{\theta^2}\biggr).
\end{gather*}
From this, it is obtained that
$$
4\biggl(1-\frac1{\theta^2}\biggr)\le f_\theta(x)\le \max\Bigl\{\lim_{x\to0^+}f_\theta(x),f_\theta(r)\Bigr\}
$$
for $x\in(0,r]$, which implies the inequality~\eqref{Arc-Hyperbolic-Sine-ineq2}. The proof of Theorem~\ref{Arc-Hyperbolic-Sine-thm2} is thus completed.
\end{proof}

\end{document}